\theoremstyle{plain}
\newtheorem{tetel}{Theorem}
\newtheorem{all}[tetel]{Proposition}
\newtheorem{lemma}[tetel]{Lemma}
\newtheorem{kov}[tetel]{Corollary}
\theoremstyle{definition}\newtheorem{Def}[tetel]{Definition}
\theoremstyle{remark}
\newcommand*{\Z}{\ensuremath{\mathbf Z}}
\newcommand*{\R}{\ensuremath{\mathbf R}}
\newcommand*{\di}{\ensuremath{\mathrm d}}
\begin{document}

\title{Inner products on the Hecke algebra of the braid group}
\author{Tam\'as K\'alm\'an\\ The University of Tokyo}
\maketitle

\begin{abstract}
We point out that the Homfly polynomial (that is to say, Ocneanu's trace functional) contains two poly\-nomial-valued inner products on the Hecke algebra representation of Artin's braid group. These bear a close connection to the Morton--Franks--Williams inequality. In these structures, the sets of positive, respectively negative permutation braids become orthonormal bases. In the second case, many inner products can be geometrically interpreted through Legendrian fronts and rulings.
\end{abstract}

\section{The Hecke algebra}

In this note we make a few observations on the Hecke algebra $\mathscr H_n(z)$. Our main reference is Jones's seminal paper \cite{jones}.
As an algebra, $\mathscr H_n(z)$ is generated by the same symbols $\sigma_1,\ldots,\sigma_{n-1}$ as Artin's braid group $B_n$. In addition to the standard relations of $B_n$ (that is, $\sigma_i\sigma_j=\sigma_j\sigma_i$ for $|i-j|\ge2$ and $\sigma_i\sigma_{i+1}\sigma_i=\sigma_{i+1}\sigma_i\sigma_{i+1}$ for all $i$) we also impose 
\begin{equation}\label{eq:heck}
\sigma_i-\sigma_i^{-1}=z\text{ for all }i.
\end{equation}

For topologists, the Hecke algebra is significant because of its role in the original definition of the Homfly and Jones polynomials \cite{jones} (see also \cite{heck} for a concise survey). Namely, using Ocneanu's trace $\mathrm{Tr}\colon\mathscr H_n(z)\to\Z[z,T]$, the \emph{framed} Homfly polynomial $H$ of a braid $\beta\in B_n$ is obtained as
\begin{equation}\label{eq:homfly}
H_\beta(v,z)=\left(\frac{v^{-1}-v}z\right)^{n-1}\cdot\mathrm{Tr}(\beta)\bigg|_{T=\frac z{1-v^2}}.
\end{equation}
In words: for $\beta\in B_n$, take its natural representation $\beta\in\mathscr H_n(z)$ (a common abuse of notation) and its trace, substitute $T=z/(1-v^2)$ in it, and normalize suitably.

If $\beta$ has exponent sum $w$, then $P_{\widehat\beta}(v,z)=v^w H_\beta(v,z)$ is an expression for the Homfly polynomial of the oriented link $\widehat\beta$ which is the closure of $\beta$. This is the version 
that satisfies the skein relation
\[v^{-1}P_{\includegraphics[totalheight=8pt]
{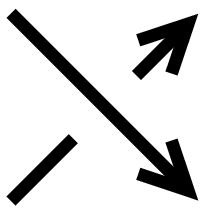}}
-vP_{\includegraphics[totalheight=8pt]{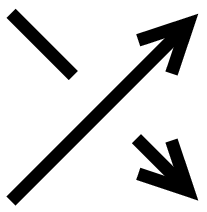}}
=zP_{\includegraphics[totalheight=8pt]
{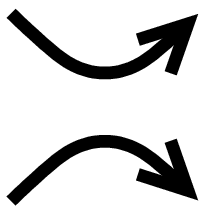}}\]
and the normalizing condition that for the unknot, we have $P_{\includegraphics[totalheight=8pt]{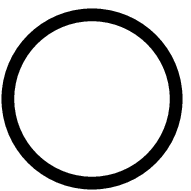}}(v,z)=1$.

From \eqref{eq:homfly}, it is not hard to deduce the famous Morton--Franks--Williams inequalities \cite{morton,fw}:
\begin{equation}\label{ineq:framed}
-n+1\le\text{any exponent of }v\text{ in }H_{\beta}(v,z)\le n-1.
\end{equation}

We will refer to these as the \emph{lower} and \emph{upper MFW estimates}. The well-known bound on the braid index of oriented links is their immediate consequence. We will pay special attention to the ``extremal parts'' of $H_\beta$ with respect to \eqref{ineq:framed}. In that spirit, let the \emph{left column} of $H_\beta$ be denoted with
\[L_\beta(z)=z^{n-1}\text{ times 
the coefficient of }v^{1-n}\text{ in }H_\beta(v,z).\] 

\begin{Def}\label{def}
For any $\beta\in B_n$, let $\beta^*$ denote the braid obtained by writing the letters of a braid word representative of $\beta$ in reverse order. Now for any two braids $\alpha$ and $\beta$ on $n$ strands, define their \emph{left inner product} by the formula
\[\langle\alpha,\beta\rangle_L=L_{\alpha\beta^*}(z).\]
\end{Def}

This depends only on the braids and not on any representatives. The left inner product has a well-defined extension to the entire algebra $\mathscr H_n(z)$ as a bilinear form with values in $\Z[z]$. It is also symmetric:

\begin{lemma}
For any $\alpha,\beta\in B_n$, we have $\langle\alpha,\beta\rangle_L=\langle\beta,\alpha\rangle_L$.
\end{lemma}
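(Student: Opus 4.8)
The plan is to reduce the claimed symmetry to a single invariance property of Ocneanu's trace. Since $*$ reverses braid words, it is an \emph{anti}-automorphism, so $(\alpha\beta^*)^*=(\beta^*)^*\alpha^*=\beta\alpha^*$. Consequently $\langle\beta,\alpha\rangle_L=L_{\beta\alpha^*}=L_{(\alpha\beta^*)^*}$, and writing $\gamma=\alpha\beta^*$ we see that the whole statement follows once we prove $L_\gamma=L_{\gamma^*}$ for every $\gamma\in B_n$. Because $L_\gamma$ is by definition $z^{n-1}$ times a fixed coefficient of $H_\gamma$, and because in \eqref{eq:homfly} the only $\gamma$-dependence is through $\mathrm{Tr}(\gamma)$ (the prefactor and substitution depend only on the strand number $n$, which $*$ leaves fixed), this in turn reduces to the identity $\mathrm{Tr}(\gamma)=\mathrm{Tr}(\gamma^*)$.

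First I would check that reversal genuinely descends to a well-defined anti-automorphism $*$ of the algebra $\mathscr H_n(z)$, fixing each generator. This is a matter of confirming that every defining relation is invariant under being read backwards: the far-commutation $\sigma_i\sigma_j=\sigma_j\sigma_i$ ($|i-j|\ge2$) is symmetric, each side of the braid relation $\sigma_i\sigma_{i+1}\sigma_i=\sigma_{i+1}\sigma_i\sigma_{i+1}$ is a palindrome, and the quadratic relation \eqref{eq:heck} involves only single generators (note $(\sigma_i^{-1})^*=\sigma_i^{-1}$, since an anti-automorphism sends inverses to inverses). Hence $*$ extends from $B_n$ to a linear anti-automorphism of $\mathscr H_n(z)$.

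The heart of the argument is then to establish $\mathrm{Tr}\circ*=\mathrm{Tr}$, and for this I would invoke the uniqueness of Ocneanu's trace from \cite{jones}. Consider the functional $\phi(a)=\mathrm{Tr}(a^*)$. It is linear, and normalized like $\mathrm{Tr}$ since $1^*=1$. It is again a trace, because $\phi(ab)=\mathrm{Tr}(b^*a^*)=\mathrm{Tr}(a^*b^*)=\phi(ba)$, the middle step using the trace property of $\mathrm{Tr}$. Finally it satisfies the same Markov rule: for $w\in\mathscr H_n\subset\mathscr H_{n+1}$, using $\sigma_n^*=\sigma_n$ together with the trace property,
\[\phi(w\sigma_n)=\mathrm{Tr}(\sigma_n w^*)=\mathrm{Tr}(w^*\sigma_n)=T\,\mathrm{Tr}(w^*)=T\,\phi(w).\]
By the uniqueness of the trace, $\phi=\mathrm{Tr}$, i.e.\ $\mathrm{Tr}(a^*)=\mathrm{Tr}(a)$ for all $a$, which completes the chain of reductions above.

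The step I expect to be genuinely delicate is exactly this last verification of the Markov property for $\phi$. Because $*$ is an anti-automorphism rather than an automorphism, applying it to $w\sigma_n$ reverses the order of the two factors, putting $\sigma_n$ on the \emph{wrong} side to apply the Markov rule directly; it is precisely the commutativity-under-trace of $\mathrm{Tr}$ that lets one restore the ordering $w^*\sigma_n$ needed to peel off the factor $T$. Everything else is routine bookkeeping. (Geometrically the identity $\mathrm{Tr}(a)=\mathrm{Tr}(a^*)$ reflects a symmetry of the braid-closure operation, but the algebraic route is entirely self-contained and avoids any orientation subtleties.)
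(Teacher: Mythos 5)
Your proof is correct, but it takes a genuinely different route from the paper's. The paper argues geometrically: it takes the standard closed-braid diagram of $\alpha\beta^*$ and rotates it by $180^\circ$ about a line perpendicular to the strands, producing the standard diagram of the closure of $\beta\alpha^*$ with all orientations reversed; since the Homfly polynomial $P$ is invariant under overall orientation reversal and the exponent sums agree, the framed polynomials $H_{\alpha\beta^*}$ and $H_{\beta\alpha^*}$ coincide, and in particular so do their left columns. Your argument instead stays entirely inside the algebra: you reduce the symmetry to the identity $\mathrm{Tr}\circ *=\mathrm{Tr}$ and prove it by checking that $a\mapsto\mathrm{Tr}(a^*)$ is a normalized trace satisfying the Markov rule, then invoking the uniqueness of Ocneanu's trace from \cite{jones}. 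The details all check out: reversal does descend to an anti-automorphism of $\mathscr H_n(z)$ fixing the generators, $w^*\in\mathscr H_n$ whenever $w\in\mathscr H_n$ (so the Markov rule for $\mathrm{Tr}$ applies to $w^*\sigma_n$), and your use of the trace property to move $\sigma_n$ back to the right is exactly the needed step. What each approach buys: the paper's is a one-picture proof that leans on topological invariance of $P$ and sidesteps all algebra; yours is self-contained in the Hecke-algebra formalism, avoids orientation conventions entirely, and establishes the stronger, reusable fact $\mathrm{Tr}(a)=\mathrm{Tr}(a^*)$ on all of $\mathscr H_n(z)$ --- which, for instance, immediately gives the symmetry of the right inner product $\langle\cdot,\cdot\rangle_R$ as well. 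The only external input you need is the uniqueness theorem for Ocneanu's trace, which is indeed available in \cite{jones}.
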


\begin{proof}
Consider the standard planar diagram for the closure of $\alpha\beta^*$, and a line $l$ in the plane that is (roughly) perpendicular to the braid strands. Rotate the diagram about $l$ in $180^\circ$ in three-space. The result is a standard diagram for the closure of $\beta\alpha^*$ (hence the two closures are isotopic), except that the orientations of the strands are all reversed. Such an overall change of orientation does not affect the Homfly polynomial $P$. Since the exponent sums in $\alpha\beta^*$ and $\beta\alpha^*$ are the same, they also share the same framed Homfly polynomial.
\end{proof}

As a $\Z[z]$--module, $\mathscr H_n(z)$ is free of rank $n!$. We will need two of its well known bases: the set of positive permutation braids, $\mathbf\Omega=\{\,\omega_\pi\,\}_{\pi\in S_n}$, and that of negative permutation braids, $\mathbf N=\{\,\nu_\pi\,\}_{\pi\in S_n}$. 

\section{Legendrian front diagrams}

The proof of our main result in the next section uses some standard techniques from Legendrian knot theory. The summary given here is kept to a bare minimum while the interested reader is referred to \cite{etn}.

The standard contact structure $\xi$ in $\R^3_{xyz}$ is the kernel of the $1$-form $\di z-y\di x$. A smooth link is \emph{Legendrian} if it is everywhere tangent to $\xi$. The \emph{front projection} of such a curve appears on the $xz$-plane. Generic fronts are immersed except for finitely many cusps. Since $y=\di z/\di x$, they do not have self-tangencies or tangents parallel to the $z$-axis (hence they contain an equal number of \emph{left and right cusps}), and at their transverse self-intersections, the strand with lower slope appears as the overcrossing one.

Let $f$ be a generic, oriented front diagram with $2C$ cusps. Its \emph{Thurston--Bennequin number} is $tb(f)=w(f)-C$, where $w$ is the writhe.

An \emph{oriented ruling} \cite{chp, fuchs} of $f$ is a collection $S$ of positive crossings in $f$ with the following properties. After performing the smoothing operation $\includegraphics[scale=.18,viewport=10 10 210 20]{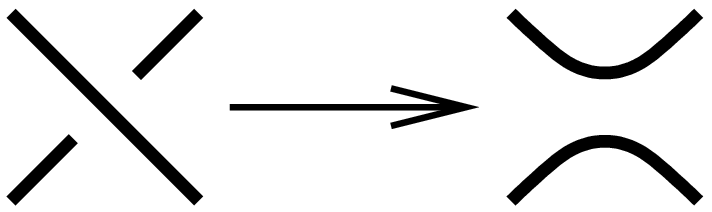}$ to all elements of $S$, the resulting diagram is a (not necessarily disjoint) union of \emph{eyes}, that is simple closed curves so that each contains exactly one left and one right cusp. The two arcs at the site of the smoothing of any $p\in S$ belong to different eyes, say $e_1$ and $e_2$. Finally, if we draw a line parallel to the $z$-axis through $p$, then the segments in which it intersects the interiors of $e_1$ and $e_2$ are either disjoint or contained in one another.

One measure of the complexity of a ruling is $\theta=C-|S|$ and then we may measure the complexity of a front $f$ by summing $z^{1-\theta}$ over all of its oriented rulings. This Laurent polynomial is called the (oriented) \emph{ruling polynomial} of $f$. A key connection was provided by 
Rutherford \cite{dan} when he showed that the ruling polynomial coincides with the coefficient of $v^{tb(f)+1}$ in the Homfly polynomial $P_K(v,z)$ of the oriented link type $K$ represented by $f$. Note in this regard that (by the transverse push-off trick, Bennequin's results on braiding transverse links \cite{benn}, and \eqref{ineq:framed}) the minimum $v$-degree in $P_K(v,z)$ is an upper bound for $tb(f)+1$ for all fronts $f$ representing $K$.

The following construction will be helpful in the next section. Let $\pi\in S_n$ be a permutation. We define a Legendrian tangle $T_\pi$ as follows. Place the permutation matrix 
of $\pi$ on the $xz$--plane as indicated in the left panel of Figure \ref{fig:egyperm} (with the $1$'s of the matrix represented by dots). Start line segments in the northwest and southwest directions from each dot as shown. Finally, turn the union of the segments into a front diagram as in the right panel, replacing the dots with cusps. Note that $T_\pi$ is a Legendrian realization of the negative permutation braid $\nu_\pi$. 

\begin{figure}
\labellist
\pinlabel $x$ at 390 10
\pinlabel $z$ at 10 380
\endlabellist
   \centering
   \includegraphics[width=3in]{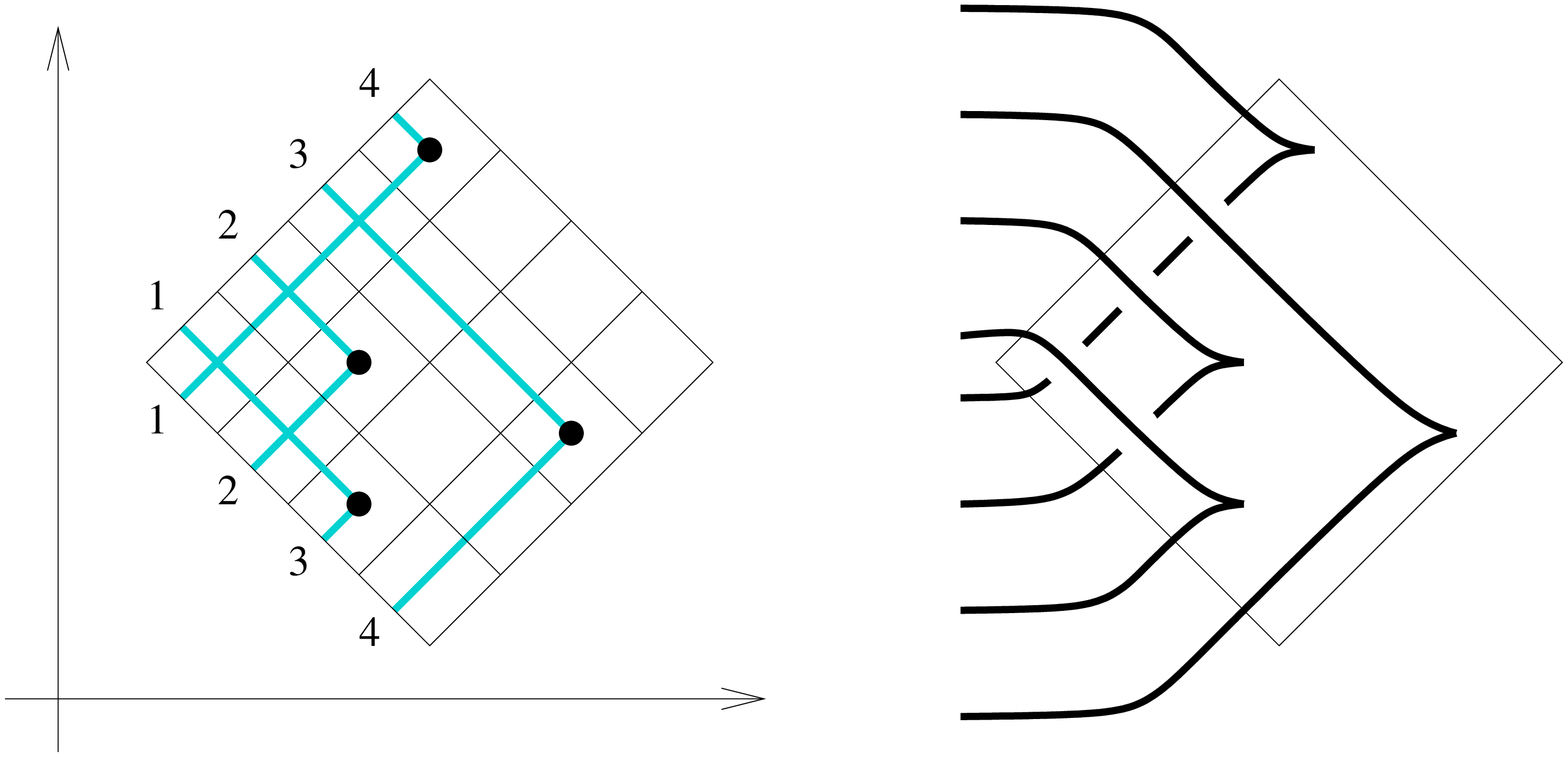} 
   \caption{The Legendrian tangle $T_\pi$ of the three-cycle $\pi=(143)\in S_4$.}
   \label{fig:egyperm}
\end{figure}

\section{The main result}

\begin{all}
The basis $\mathbf N=\{\,\nu_\pi\,\}_{\pi\in S_n}$ of $\mathscr H_n(z)$ consisting of negative permutation braids is orthonormal with respect to the left inner product.
\end{all}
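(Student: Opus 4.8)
The plan is to compute $\langle\nu_\pi,\nu_\rho\rangle_L=L_{\nu_\pi\nu_\rho^*}(z)$ by drawing the closure of $\nu_\pi\nu_\rho^*$ as a Legendrian front and extracting its left column from Rutherford's theorem. I would begin with the observation that reversing a reduced negative word for $\nu_\rho$ gives a reduced negative word for the inverse permutation, so that $\nu_\rho^*=\nu_{\rho^{-1}}$ and hence $\langle\nu_\pi,\nu_\rho\rangle_L=L_{\nu_\pi\nu_{\rho^{-1}}}(z)$. In particular the two bases interact only through the negative braids $\nu_\pi\nu_{\rho^{-1}}$, whose underlying permutation is $\pi\rho^{-1}$.

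Next I would assemble a front $f$ for $\widehat{\nu_\pi\nu_{\rho^{-1}}}$ by gluing $T_\pi$ to a mirror image of $T_\rho$; the latter is a Legendrian realization of $\nu_\rho^*=\nu_{\rho^{-1}}$ presenting $n$ left cusps, while $T_\pi$ contributes $n$ right cusps. Thus $f$ has exactly $2n$ cusps, so $C=n$, and---being built from realizations of negative permutation braids---its crossings are exactly the $\ell(\pi)+\ell(\rho)$ negative crossings of the braid, so its writhe is the exponent sum $w=-\ell(\pi)-\ell(\rho)$ and $tb(f)=w-C=w-n$. Comparing with the lower MFW estimate, $tb(f)+1=w+1-n$ is the smallest exponent of $v$ permitted in $P_{\widehat{\nu_\pi\nu_{\rho^{-1}}}}$, so $f$ is a front of maximal Thurston--Bennequin number.

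Rutherford's theorem now identifies the coefficient of $v^{tb(f)+1}=v^{w+1-n}$ in $P_{\widehat{\nu_\pi\nu_{\rho^{-1}}}}$---which, after the normalization $H=v^{-w}P$, is precisely the coefficient of $v^{1-n}$ in $H_{\nu_\pi\nu_{\rho^{-1}}}$---with the ruling polynomial $\sum_S z^{1-\theta}$ of $f$. Multiplying by $z^{n-1}$ and substituting $\theta=C-|S|=n-|S|$ collapses this to $L_{\nu_\pi\nu_{\rho^{-1}}}(z)=\sum_S z^{|S|}$, the sum taken over all oriented rulings $S$ of $f$.

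The enumeration of these rulings is then forced by two facts. Since every crossing of $f$ is negative while an oriented ruling may only switch positive crossings, the sole candidate is $S=\emptyset$; and $S=\emptyset$ is a ruling exactly when $f$ is already a union of eyes, i.e.\ when each component meets exactly one left and one right cusp. As each closed front component carries at least one cusp of each type and there are $n$ of each in all, this occurs if and only if $f$ has $n$ components, equivalently $\pi\rho^{-1}=\mathrm{id}$. Hence $L_{\nu_\pi\nu_{\rho^{-1}}}=1$ when $\pi=\rho$ (a single empty ruling) and $L_{\nu_\pi\nu_{\rho^{-1}}}=0$ otherwise, which is the claimed orthonormality. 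The step I expect to be the true obstacle is the middle one: verifying on the level of the diagram that the assembled front $f$ acquires no extra cusps or crossings, and that under the orientation it inherits as the coherent braid closure every crossing is negative---this single sign computation is what both pins $tb(f)$ to the MFW value (so that Rutherford applies) and rules out every nonempty switch set.
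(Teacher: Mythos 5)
Your proposal is correct and follows essentially the same route as the paper: build a front for $\widehat{\nu_\pi\nu_\kappa^*}$ from $T_\pi$ and a reflected copy of $T_\kappa$, note that $tb=w-n$ and that all crossings are negative so only the empty ruling can occur, and apply Rutherford's theorem to identify the ruling polynomial with the coefficient of $v^{1-n}$ in the framed Homfly polynomial. The only (harmless) additions are your explicit identification $\nu_\rho^*=\nu_{\rho^{-1}}$ and the observation that the front realizes the maximal Thurston--Bennequin number, neither of which is needed since Rutherford's theorem applies to any front.
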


\begin{proof}
Let $\pi$ and $\kappa$ be elements of $S_n$. Consider the Legendrian representative of $\widehat{\nu_\pi\nu^*_\kappa}$ whose front projection is shown in the left side of Figure \ref{fig:ketperm}. It is constructed from the tangle $T_\pi$ and $\varphi(T_\kappa)$, where $\varphi$ is a rotation in $180^\circ$ about the $z$--axis in three-space. Observe that it has oriented rulings if and only if $\pi=\kappa$, and in that case a unique one with $\theta=n$. (Because all crossings are negative in the front, the only possible oriented ruling is the empty set, and for that to work, the left and right cusps have to be readily matched up.) Thus its ruling polynomial is either $\sum z^{1-\theta}=z^{1-n}$ or $0$ depending on whether $\pi=\kappa$ or not. As the Thurston--Bennequin number of this front is $w-n$, where the non-positive quantity $w$ is the combined algebraic number of crossings in $\pi$ and $\kappa$, from Rutherford's theorem we see that this ruling polynomial is the coefficient of $v^{w-n+1}$ in the Homfly polynomial, that is the coefficient of $v^{-n+1}$ in the framed Homfly polynomial. The Proposition follows immediately. 
\begin{figure}
\labellist
\pinlabel $T_{\pi^{-1}}$ at 1460 220
\pinlabel $\beta$ at 1120 100
\endlabellist
   \centering
   \includegraphics[width=4in]{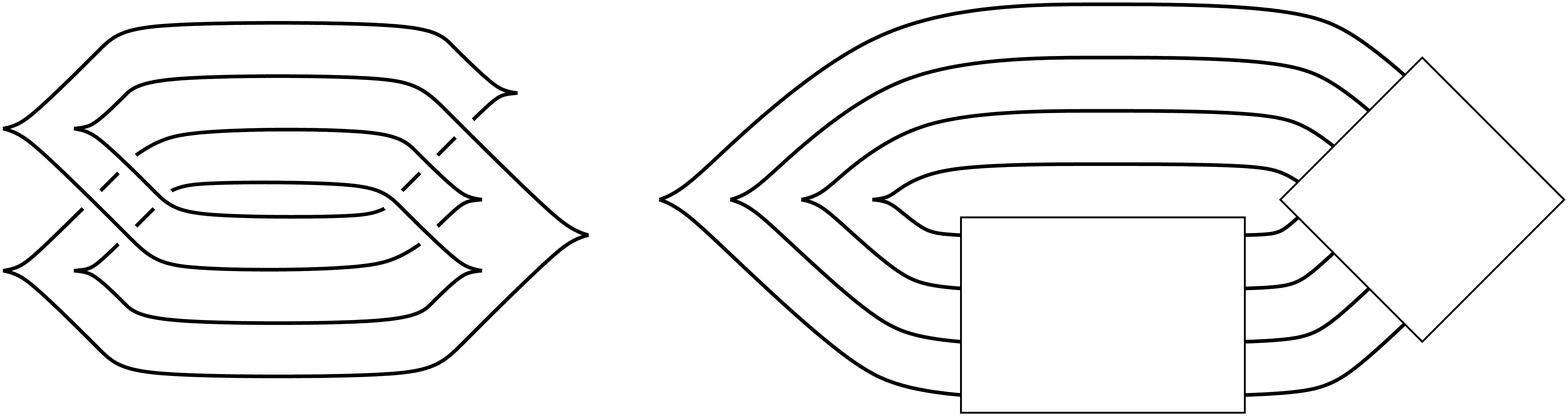} 
   \caption{Front diagram built out of two negative permutation braids (left) and a positive braid and a negative permutation braid (right).}
   \label{fig:ketperm}
\end{figure}
\end{proof}

\begin{kov}
The expansion of any braid $\beta$ in $\mathbf N$ is obtained as
\[\beta=\sum_{\pi\in S_n}\langle\beta,\nu_\pi\rangle_L\cdot\nu_\pi.\]
\end{kov}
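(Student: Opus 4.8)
The plan is to treat this as a routine consequence of having an orthonormal basis, invoking only the Proposition together with the bilinearity of the left inner product. First I would recall that, being free of rank $n!$ as a $\Z[z]$--module, the algebra $\mathscr H_n(z)$ has $\mathbf N=\{\,\nu_\kappa\,\}_{\kappa\in S_n}$ as a basis. Hence the element $\beta\in\mathscr H_n(z)$ (the image of the braid under the natural representation) admits a unique expansion
\[\beta=\sum_{\kappa\in S_n}c_\kappa\,\nu_\kappa,\qquad c_\kappa\in\Z[z].\]
The entire task reduces to identifying each coefficient $c_\kappa$ with $\langle\beta,\nu_\kappa\rangle_L$.

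The second step is to pair both sides with a fixed basis vector $\nu_\pi$. Since the left inner product extends to a $\Z[z]$--bilinear form on all of $\mathscr H_n(z)$ (as asserted after Definition~\ref{def}), distributing over the sum in the first slot yields
\[\langle\beta,\nu_\pi\rangle_L=\sum_{\kappa\in S_n}c_\kappa\,\langle\nu_\kappa,\nu_\pi\rangle_L.\]
By the Proposition, $\mathbf N$ is orthonormal, so $\langle\nu_\kappa,\nu_\pi\rangle_L=\delta_{\kappa\pi}$ and the sum collapses to $c_\pi$. Therefore $c_\pi=\langle\beta,\nu_\pi\rangle_L$ for every $\pi$, which is precisely the stated formula after renaming $\pi$ as the summation index.

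The only point that requires a moment's care — and the closest thing to an obstacle — is the passage from braids to algebra elements. Definition~\ref{def} phrases the inner product for braids via $L_{\alpha\beta^*}$, whereas the expansion above lives in $\mathscr H_n(z)$; I would therefore lean on the asserted bilinear extension so that $\langle\beta,\nu_\pi\rangle_L$ may legitimately be distributed over $\sum_\kappa c_\kappa\nu_\kappa$. Because $\beta$ and each $\nu_\pi$ are genuine braids, the extended form agrees with the defining formula on these arguments, so no inconsistency arises. Everything else is formal linear algebra, with no geometric input beyond the orthonormality already secured in the Proposition.
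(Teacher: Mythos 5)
Your argument is correct and is exactly the intended one: the paper states this Corollary without proof as an immediate formal consequence of the Proposition, relying on the same expansion in the basis $\mathbf N$, bilinearity of the extended form, and orthonormality to collapse the sum. Your extra care about passing from braids to algebra elements via the asserted bilinear extension is a reasonable (and harmless) elaboration of what the paper leaves implicit.
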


In particular if $\beta$ is a positive braid, these coefficients always have a geometric interpretation as ruling polynomials. Namely, $\langle\beta,\nu_\pi\rangle_L$ is the ruling polynomial of the front diagram shown in the right side of Figure \ref{fig:ketperm}. Here, $\beta$ is to be drawn in the indicated box horizontally, with crossings appearing as $\includegraphics[viewport=2 2 20 10]{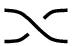}$ . (Here, the key is that the diagram only contains $2n$ cusps. The same is possible for $\beta=\beta_1\nu\beta_2$, where $\beta_1$ and $\beta_2$ are positive and $\nu$ is a negative permutation braid.)

Analogous to Definition \ref{def}, a \emph{right inner product} may be constructed using the coefficient of the other extreme term in the framed Homfly polynomial:
\[\langle\alpha,\beta\rangle_R=(-z)^{n-1}\text{ times the coefficient of }v^{n-1}\text{ in }H_{\alpha\beta^*}(v,z).\]
With respect to this, the basis $\mathbf\Omega$ of positive permutation braids is orthonormal. One easy way of showing this is by noting the identity $\langle\alpha,\beta\rangle_L=\langle\alpha^{-1},\beta^{-1}\rangle_R$ for any braids $\alpha,\beta\in B_n$.

The following theorem appeared as Remark 3.3 in \cite{nyolc}. The present paper grew out of a desire to clarify what was then a rather vague suggestion.

\begin{tetel}
The lower (resp.\ upper) 
MFW estimate \eqref{ineq:framed} is \emph{not} sharp for the braid $\beta$ if and only if $\beta$ is orthogonal in terms of the left (resp.\ right) inner product to the unit braid in the Hecke algebra.
\end{tetel}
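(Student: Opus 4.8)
The plan is to show that the theorem is essentially a restatement of the definitions, once one computes the inner product of $\beta$ with the unit. Write $1\in\mathscr H_n(z)$ for the multiplicative identity, i.e.\ the image of the trivial braid (the identity permutation); note that this is simultaneously $\omega_{\mathrm{id}}$ and $\nu_{\mathrm{id}}$. The first step is to observe that a braid-word representative of $1$ is the empty word, so $1^*=1$, and therefore by Definition \ref{def},
\[\langle\beta,1\rangle_L=L_{\beta\cdot 1^*}(z)=L_\beta(z).\]

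The second step is to translate the vanishing of this column into the vanishing of a single $v$-coefficient. By definition $L_\beta(z)=z^{n-1}$ times the coefficient of $v^{1-n}$ in $H_\beta(v,z)$. Since $\Z[z]$ is an integral domain and $z^{n-1}\ne0$, we have $L_\beta=0$ if and only if that coefficient of $v^{1-n}$ is zero. Combining with the first step, $\beta$ is left-orthogonal to $1$ exactly when $v^{1-n}$ does not occur in $H_\beta$.

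The third step is to match this with MFW sharpness. The lower estimate in \eqref{ineq:framed} asserts $1-n\le$ every exponent of $v$; it is \emph{sharp} for $\beta$ precisely when the extreme value $1-n$ is attained, i.e.\ when the coefficient of $v^{1-n}$ is nonzero. Hence ``not sharp'' is equivalent to that coefficient vanishing, which by the previous step is equivalent to $\langle\beta,1\rangle_L=0$. This gives the lower/left half of the statement. The upper/right half is proved identically, replacing Definition \ref{def} by the analogous right inner product: here $\langle\beta,1\rangle_R=(-z)^{n-1}$ times the coefficient of $v^{n-1}$ in $H_\beta$, and the upper estimate is sharp if and only if that coefficient is nonzero, so non-sharpness is equivalent to right-orthogonality to $1$.

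I do not expect a genuine obstacle here; the argument is a direct unwinding of Definition \ref{def} and the definition of the left (right) column. The only point that deserves a word of care is the use of $\Z[z]$ being an integral domain, so that multiplying a $v$-coefficient by the nonzero monomial $z^{n-1}$ (resp.\ $(-z)^{n-1}$) cannot affect whether it is zero; without this remark one might worry that the column polynomial could vanish while the underlying $v$-coefficient does not.
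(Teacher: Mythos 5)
Your argument is correct: since $1^*=1$, the inner product $\langle\beta,1\rangle_L$ is exactly $L_\beta$, which vanishes precisely when the coefficient of $v^{1-n}$ in $H_\beta$ does, i.e.\ precisely when the lower MFW estimate fails to be attained (and symmetrically for the right inner product and $v^{n-1}$). The paper gives no proof of this theorem at all --- it only cites Remark 3.3 of an earlier paper --- and your direct unwinding of Definition \ref{def} and the definition of the left (right) column is exactly the intended argument.
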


We close the paper by indicating two possible directions for future research. The fact that the left and right inner products are polynomial-valued suggests a rich structure that might be further explored. For example, one may ask about specific values of $z$: are some more significant than others? Can we associate special $z$ values to particular braids in a meaningful way?

Also, one may wonder whether other parts of the Homfly polynomial can be used to define inner products, as well as if there are natural inner products that make other bases, such as the Kazhdan--Lusztig basis, orthogonal.


\end{document}